\newlength{\indentationFormule}
\newlength{\indentationTotaleFormule}
\newlength{\indentationCommentaire}
\newlength{\indentationDerivation}
\newlength{\largeurLangle}
\newlength{\largeurBoiteCommentaire}
\newenvironment{deriv}[1][\leftmargini]%
{\setlength{\indentationDerivation}{#1}%
	\setlength{\indentationTotaleFormule}{\indentationFormule}
	\addtolength{\indentationTotaleFormule}{#1}
	\setlength{\largeurBoiteCommentaire}{\linewidth}
	\addtolength{\largeurBoiteCommentaire}{-\indentationFormule}
	\addtolength{\largeurBoiteCommentaire}{-\indentationCommentaire}
	\addtolength{\largeurBoiteCommentaire}{-\largeurLangle}
	\addtolength{\largeurBoiteCommentaire}{-\indentationDerivation}
	\begin{list}{}{\setlength{\leftmargin}{\indentationTotaleFormule}}
		\setlength{\baselineskip}{1.3\baselineskip}
		\item$}%
	{\hbox{}$\end{list}}  
\newcommand{\<}[1]{\\\hspace*{-\indentationFormule}\makebox(0,0)[bl]{$#1$}\hspace*{\indentationFormule}}
\newcommand{\commentaire}[1]{\hspace*{\indentationCommentaire}\langle\hspace*{.4em}%
	\begin{minipage}[t]{\largeurBoiteCommentaire}%
		#1 $\rangle$\\\hbox{}
	\end{minipage}\\[-1ex] 
}
\begin{document}
\title{ Mathematical Morphology \textit{via} Category Theory
}
%
%
%

\author{Hossein Memarzadeh Sharifipour \inst{1} \and
Bardia Yousefi\inst{2,3} }
\authorrunning{H.M. Sharifipour \& B. Yousefi}
%
\institute{Department of Computer Science, Laval University, Québec, CA \email{Hossein.MemarzadehSharifipour.1@ulaval.ca}\and
Department of Electrical and Computer Engineering, Laval University, Québec, CA
\\\and
\textit{Address:} University of Pennsylvania, Philadelphia PA 19104\\
\email{Bardia.Yousefi.1@ulaval.ca}}
\maketitle              
\begin{abstract}
Mathematical morphology contributes many profitable tools to image processing area. Some of these methods considered to be basic but the most important fundamental of data processing in many various applications. In this paper, we modify the fundamental of morphological operations such as dilation and erosion making use of limit and co-limit preserving functors within (\textit{Category Theory}). 
Adopting the well-known matrix representation of images, the category of matrix, called $\mathsf{Mat}$, can be represented as an image. With enriching $\mathsf{Mat}$ over various semirings such as \textit{Boolean} and $\mathsf(max,+)$ semirings, one can arrive at classical definition of binary and gray-scale images using the categorical tensor product in $\mathsf{Mat}$. With dilation operation in hand, the erosion can be reached using the famous \textit{tensor-hom} adjunction. This approach enables us to define new types of dilation and erosion between two images represented by matrices using different semirings other than \textit{Boolean} and $\mathsf(max,+)$ semirings. The viewpoint of morphological operations from \textit{category theory} can also shed light to the claimed concept that mathematical morphology is a model for linear logic.

\keywords{Mathematical morphology \and Closed monoidal categories \and Day convolution. Enriched categories, category of semirings}
\end{abstract}
\section{Introduction}
\paragraph{Mathematical morphology} is a structure-based analysis of images constructed on set theory concepts. Two main induced transformations in mathematical morphology are dilation and erosion established initially by translations, unions and intersections on subsets of euclidean spaces. These transformations were extended to complete lattices later. In a more general form, dilation and erosion are Galois connections over mappings between complete lattices.     

According to ~\cite{heijmans1990algebraic}, every operator on a complete lattice which preserves finite supremum($\vee$) is regarded as a dilation and each infimum preserving operator can be inspected as an erosion. This is the most universal definition of dilation and erosion appeared in literature up to our knowledge. Looking toward category theory viewpoint, definition of the dilation and erosion can be generalized to left and right adjoint functors that preserve co-limits and limits respectively.

The first section stares at mathematical morphology and category theory briefly. Most of the consisting material for category theory  can be found  in \cite{borceux1994handbook,opac-b1078351,borceux1994handbook2,leinster2014basic} . 
\subsection{Matrix representation of images}
An image on a computer springs up from quantization of both image space and intensities into discrete values. It is merely a 2D rectangular array of integer values. It is widely accepted to record intensity as an integer number over the interval $[0,255]$ \cite{memar2017}. Conversely, each pixel of a colored image contains three values over the interval $[0,255]$ corresponding to its RGB values. Roughly speaking, the matrix representation of an image deals closely with a function entitled as a picture function. It is a function $f$ defined on spatial variables $x,y$. Intuitively, $f(x,y)$ defines the intensity value of the pixel at point $(x,y)$. 
The following definitions correspond to binary, gray-scaled and color images.
\begin{definition}
	A binary image is a rectangular matrix with all elements values 0 or 1. 
\end{definition}
\begin{definition}
	A gray-scaled image is a rectangular matrix with values ranging within $[0,255]$.  
\end{definition}
\begin{definition}
	A color image is a 2D image which has vector of 3 values at each spatial point or pixel. 
\end{definition} 

\subsection{Mathematical morphology} 
Dilation and erosion constitute the basic operations which construct the backbone for clarifying other widely used operations such as opening, closing, hit-miss and a few others. They have their roots in set theory and boil from a set theoretical view point. These transformations are defined on elements of two sets called the source and structuring element respectively. The structuring element is generally much smaller sized comparing to the image that it acts on. It functions as a pattern probing the source image, targeting at finding its structure. First we define dilation for  binary images as follows \cite{heijmans1990algebraic}.
\begin{definition}
	If $A,B$ are two sets determining the source image and the structuring element respectively, dilation of $A$ and $B$ is defined by \cite{haralick1987image} and  \cite{serra1983image} as   
	\begin{equation}  
	A \oplus B=\{a+b| a \in A , b \in B\}, 
	\end{equation}
	
\end{definition}
This operation is also called the Minkowski sum. The pertinence of dilation in image analysis area varies from image expanding to filling holes. Erosion is the dual of dilation defined by: 
\begin{definition}
	The erosion of a set $A$ with a structuring element $B$ is: \cite{serra1983image,heijmans1990algebraic}
	\begin{equation}
	A \ominus B= \{ x \in \mathbb{Z}^2| \text{ for every  } b \in B , \text{ there exists  an } a \in A \text{ such that }  x=a-b\}
	\end{equation}
	Erosion of two sets $A,B$ can also be defined as 
	\begin{equation}  A \ominus B=\{h \in \mathbb{Z}^2| B_h  \subseteq A \}   \end{equation}
	where $B_h=\{ b+h| b \in B\}$ is the translating of $B$ along the vector $h$ and reflection of the set $B$ with respect to origin is defined like 
	\begin{equation} \breve{B}=\{x \in \mathbb{Z}^2|  \text{ for some } b \in B, x =-b\} \end{equation}
\end{definition}
duality of dilation and erosion means that erosion can be written in terms of the dilation:
\begin{equation}
(A \ominus B) ^c=A^c \oplus \breve{B}
\end{equation}
where $\breve{B}$ has been defined before (3.13). In other words, dilating the foreground is the same as eroding the background, but the structuring element reflects
between the two. Likewise, eroding the foreground is the same as dilating the background.

Dilation and erosion are defined for gray scale images in a different way. \\
Dilation and erosion  of $f$ where $f: F \rightarrow \mathbb{Z}, F \subseteq \mathbb{Z}^2$ is a function that maps $(x,y) \in \mathbb{Z}^2$ to gray scale value of pixel at $(x,y)$ with structuring element $B$ is denoted as \begin{equation}
(f \oplus B)(x,y)=\text{max}_{(s,t) \in B} \{f(x-s,y-t)\} 
\end{equation}
\begin{equation}
(f \Theta B)(x,y)=\text{min}_{(s,t) \in B} \{f(x+s,y+t)\} 
\end{equation}
The following more general definition of dilation and erosion can be found in \cite{heijmans1990algebraic}. 
\begin{definition}
	Let $\mathscr{L}$ be a complete lattice and $\mathscr{E}_1,\mathscr{E}_2$ be arbitrary sets. The operator $\delta: \mathscr{L}^{\mathscr{E}_1} \longrightarrow \mathscr{L}^{\mathscr{E}_2}$ is a dilation if and only if for every $x \in \mathscr{E}_1$ and $y \in \mathscr{E}_2$ there exists a $\delta_{y,x}: \mathscr{L} 
	\longrightarrow \mathscr{L}$ such that for $F_1 \in \mathscr{L}^{\mathscr{E}_1}$ and $ y \in \mathscr{E}_2$, \[ \delta(F_1)(y) = \bigvee_{x \in \mathscr{E}_1} \delta_{y,x}(F_1(x)) \]
	
	The erosion  $\epsilon: \mathscr{L}^{\mathscr{E}_1} \longrightarrow \mathscr{L}^{\mathscr{E}_2}$  is given by:  \[ \epsilon(F_2)(x) = \bigwedge_{y \in \mathscr{E}_2} \epsilon_{y,x}(F_2(y)) \]
\end{definition}
\begin{definition}
	Let $(A,\leq),(B,\leq)$ be two partially ordered sets with two mappings, $F:A \longrightarrow B$ and $U:B \longrightarrow A$. A monotone Galois connection between $F,U$ is for all $x \in A$ and $y \in B$ : \[ F(x) \leq y \Leftrightarrow x \leq U(y) \]
\end{definition}
$F$ is referred as the left adjoint and $U$ as an right adjoint. Monotone Galois connections are entitled as adjunctions in literature \cite{erne2004adjunctions,shmuely1974structure} likewise. The other variety of Galois connections called antitone Galois connections emerges in literature as follows.
\begin{definition}
	Let $(A,\leq),(B,\leq)$ be two partially ordered sets. Let $F:A \longrightarrow B$ and $U:B \longrightarrow A$. $F,U$ are an antitone Galois connection if for all $x \in A$ and $y \in B$ : \[ y \leq F(x) \Leftrightarrow x \leq U(y) \]
\end{definition}
\begin{theorem}
	\[ \delta(F_1) \leq F_2 \Leftrightarrow \epsilon(F_2) \leq F_1 \]
\end{theorem}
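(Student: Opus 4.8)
The plan is to read the asserted equivalence as the statement that the dilation $\delta$ and the erosion $\epsilon$ constitute a Galois connection, and to obtain it by descending to the pointwise operators $\delta_{y,x},\epsilon_{y,x}\colon\mathscr{L}\to\mathscr{L}$ out of which $\delta$ and $\epsilon$ are assembled. The structural facts I would rely on are exactly three: the order on each $\mathscr{L}^{\mathscr{E}_i}$ is computed pointwise; $\bigvee$ is a least upper bound; and $\bigwedge$ is a greatest lower bound. Granting that each pair $\delta_{y,x},\epsilon_{y,x}$ is itself a Galois connection on $\mathscr{L}$ (this is the relationship that makes $\epsilon$ the dual of $\delta$ in the preceding definition), the whole claim should reduce to a single scalar adjunction inequality quantified over $x\in\mathscr{E}_1$ and $y\in\mathscr{E}_2$.

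First I would unfold the left-hand side. Because $\leq$ on $\mathscr{L}^{\mathscr{E}_2}$ is the pointwise order and $\delta(F_1)(y)=\bigvee_{x}\delta_{y,x}(F_1(x))$, the universal property of the join as least upper bound turns $\delta(F_1)\leq F_2$ into the family of scalar inequalities $\delta_{y,x}(F_1(x))\leq F_2(y)$, ranging over all $x$ and $y$ simultaneously. This step is a genuine biconditional and introduces no side conditions. I would then transport each scalar inequality across the pointwise connection between $\delta_{y,x}$ and $\epsilon_{y,x}$, producing a companion family of scalar inequalities relating $\epsilon_{y,x}(F_2(y))$ to $F_1(x)$, and finally recombine these through $\epsilon(F_2)(x)=\bigwedge_{y}\epsilon_{y,x}(F_2(y))$ to recover the single relation $\epsilon(F_2)\leq F_1$.

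I expect this last recombination to be the main obstacle, and the point at which the exact orientation of the pointwise connection is decisive. The sup-reduction on the left is automatically an equivalence, but folding a family of scalar inequalities into a single statement about the infimum $\bigwedge_{y}\epsilon_{y,x}(F_2(y))$ is controlled by the universal property of $\bigwedge$, and that property relates \emph{below every member} to \emph{below the infimum} in only one orientation. The delicate work is therefore to fix the pointwise adjunction $\delta_{y,x}\dashv\epsilon_{y,x}$ precisely so that, after transport, the scalar inequalities reassemble biconditionally through the meet into exactly $\epsilon(F_2)\leq F_1$, and to verify that \emph{both} directions survive the passage to the infimum; securing this orientation at the meet is where the substance of the argument lies.
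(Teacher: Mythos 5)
The paper offers no proof of this theorem at all: it is stated bare and immediately glossed as the assertion that $\delta$ and $\epsilon$ form a monotone Galois connection, so there is no argument in the paper to compare yours against line by line. Your three-step skeleton (pointwise order, universal property of $\bigvee$, transport across pointwise adjunctions, universal property of $\bigwedge$) is the standard Heijmans--Ronse argument and is the right one. Your added hypothesis that each pair $\delta_{y,x},\epsilon_{y,x}$ is an adjoint pair on $\mathscr{L}$ is also genuinely needed: the paper's definition of erosion allows an arbitrary family $\epsilon_{y,x}$, and the theorem is false unless $\epsilon_{y,x}$ is taken to be the upper adjoint (residual) of $\delta_{y,x}$, so making that assumption explicit was correct.

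However, the obstacle you defer at the end is not ``delicate work'' that can be completed; it is impossible, because the theorem as printed is mis-oriented. Run your own chain: $\delta(F_1)\le F_2$ iff $\delta_{y,x}(F_1(x))\le F_2(y)$ for all $x,y$ (universal property of $\bigvee$, biconditional as you say); transporting across the monotone adjunction $\delta_{y,x}(a)\le b \Leftrightarrow a\le\epsilon_{y,x}(b)$ gives $F_1(x)\le \epsilon_{y,x}(F_2(y))$ for all $x,y$; and the universal property of $\bigwedge$, namely $a\le\bigwedge_i b_i$ iff $a\le b_i$ for all $i$, folds this biconditionally into $F_1(x)\le\bigwedge_y\epsilon_{y,x}(F_2(y))=\epsilon(F_2)(x)$, i.e.\ into $F_1\le\epsilon(F_2)$, \emph{not} $\epsilon(F_2)\le F_1$. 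No choice of orientation for the pointwise connection repairs this: the printed target $\epsilon(F_2)\le F_1$ places the infimum on the smaller side, and ``$\bigwedge_y b_y\le a$'' is not equivalent to any uniform family of conditions indexed by $y$ (an antitone transport pushes the forward implication through the meet, but the converse direction fails, exactly the one-sidedness you identified). The provable statement is $\delta(F_1)\le F_2 \Leftrightarrow F_1\le\epsilon(F_2)$, which is precisely the paper's own definition of a monotone Galois connection with $F=\delta$ and $U=\epsilon$; the printed inequality contradicts the paper's own gloss of the theorem and is evidently a typo. Your proof should conclude by deriving the corrected orientation and noting the discrepancy, rather than promising to secure the printed one.
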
 
This theorem confirms that dilation and erosion engage in a monotonic Galois connection.
\subsection{Category theory}
Category theory is an effort for generalizing and simplifying many properties of mathematical systems by denoting them with objects and arrows. Each arrow $f:A \to B$ represents a function from an object $A$ to another object $B$. A category is small if its set of objects and arrows are small.

A contravariant functor $F:\mathcal{A}^{OP} \to \mathcal{B}$ maps every object  $A \in \mathcal{A}$ to $F(A) \in \mathcal{B}$ and there exist a mapping $\mathcal{A}(A,A^\prime) \to \mathcal{B}(FA,FA^\prime)$ for each mapping $f:A \to A^\prime \in \mathcal{A}$. Reminding that $\mathcal{A}(A,A^\prime)$ is an arrow between the objects, these data are subject to following two conditions: 
\begin{itemize}
	\item 
	any two morphism $f \in \mathcal{A}(A,A^\prime)$ and $g \in \mathcal{A}(A^\prime,  A''$) can be decomposed by $F(g \circ f)=F(f) \circ F(g)$. 
	\item  For any object $A \in \mathscr{A}$ , $F(1_A)=1_{FA}$. 
\end{itemize}
A contravariant  functor reverses the direction of arrows. For example $f:A \to B$  gets $F(f):f(B) \to f(A)$.

Conversely, a covariant functor $F: \mathcal{C} \to \mathcal{D}$ preserves the direction of arrows. Everything is the same as the contravariant functor except : $F(f \circ g)=F(f) \circ F(g)$ for arrows $f,g$ in $\mathcal{C}$.

\subsubsection{Natural transformations}
\begin{definition}
	Let $\mathcal{A}$ and $\mathcal{B}$ be categories and $F,G$ two functors $F,G:\mathcal{A} \to \mathcal{B}$. A natural transformation between $F,G$ is an arrow $\alpha_x:F(x) \to G(x)$ for any object $X \in \mathcal{A}$ such that for any arrow $f: X \to Y \in  \mathcal{A}$, the diagram depicted in figure \ref{fig1} commutes,
	\begin{figure}[!h]
		\begin{center}
			\label{def:wedge}	
			\begin{tikzpicture}
			\matrix (m) [matrix of math nodes,row sep=4em,column sep=4em,minimum width=4em]
			{
				F(X)	& F(Y)  \\  G(X) &    G(Y) \\};
			\path[-stealth]
			(m-1-1)   	   edge [sloped, above] node  {$F(f)$} (m-1-2)
			(m-1-1)   	   edge [sloped, above] node  {$\alpha_X$} (m-2-1)
			(m-2-1)   	   edge [ sloped, above] node  {$ G(f) $} (m-2-2)
			(m-1-2)   	   edge [ sloped, above] node  {$\alpha_Y $} (m-2-2);
			\end{tikzpicture}		
		\end{center}
		
		\caption {Natural transformations }
	     \label{fig1}
	\end{figure}
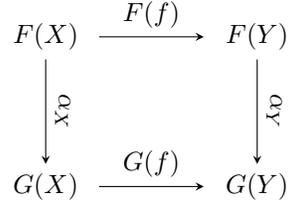
\end{definition}
Natural transformations resemble functor isomorphisms. 
\subsubsection{(Co)-limits}
\begin{definition}
	Given a functor $F: \mathcal{A} \to \mathcal{B}$, a cone of $F$ is an object $O \in \mathcal{B}$ together with a family of arrows $\phi_x:O \to F(x)$ for each $x \in \mathcal{A}$ such that for each arrow $f: x \to y $ in $\mathcal{A}$ we have $Ff\circ \phi_x=\phi_y$ according to figure \ref{fig5}.
	
	\begin{figure}	
		\begin{center}
			\begin{tikzpicture}[descr/.style={fill=white}]
			\matrix(m)[matrix of math nodes, row sep=3em, column sep=2.8em,
			text height=1.5ex, text depth=0.25ex]
			{&O\\F(X)&F(Y)\\};
			\path[->,font=\scriptsize]
			(m-1-2) edge node[above left] {$\phi_x$} (m-2-1)
			edge node[descr] {$\phi_y$} (m-2-2);					
			\path[->]
			(m-2-1) edge node [below]{$F(f)$} (m-2-2);
			\end{tikzpicture}
		\end{center}
		\caption {Diagram of a cone}
		\label{fig5}
	\end{figure}
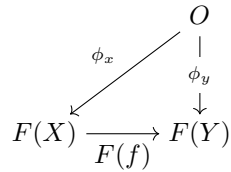
    
\end{definition}
\begin{definition}
	A limit of a functor $F: \mathcal{A} \to \mathcal{B}$, is a universal cone $(L,\phi_x)$  such that for every other cone $(N,\psi_x)$ of $F$ there is a unique arrow $u: N \to L$ such that $\psi_x= \phi_x \circ u$ for every $X$ in $\mathcal{B}$. \\
        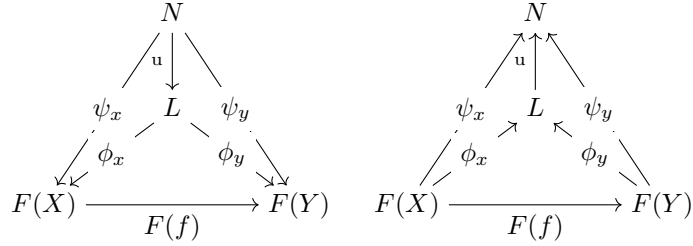
\begin{figure}[h]
        \centering	
		\begin{tikzpicture}[descr/.style={fill=white}]
		\matrix(m)[matrix of math nodes, row sep=2.2em, column sep=2.5em,
		text height=1.5ex, text depth=0.25ex]
		{&N\\&L\\ F(X)&& F(Y)\\};
		\path[->,font=\scriptsize]
		(m-1-2) edge node[ left] {u} (m-2-2);
		\path[->]
		(m-3-1) edge node [below]{$F(f)$} (m-3-3);
		\path[->]
		(m-1-2) 
		edge node[descr] {$\psi_x$} (m-3-1)
		edge node[descr] {$\psi_y$} (m-3-3);
		\path[->]
		(m-2-2) 
		edge node[descr] {$\phi_x$} (m-3-1)
		edge node[descr] {$\phi_y$} (m-3-3);
		\end{tikzpicture}
		\begin{tikzpicture}[descr/.style={fill=white}]
		\matrix(m)[matrix of math nodes, row sep=2.2em, column sep=2.5em,
		text height=1.5ex, text depth=0.25ex]
		{&N\\&L\\ F(X)&& F(Y)\\};
		\path[->,font=\scriptsize]
		(m-2-2) edge node[left] {u} (m-1-2);
		\path[->]
		(m-3-1) edge node [below]{$F(f)$} (m-3-3);
		\path[->]
		(m-3-1) 
		edge node[descr] {$\psi_x$} (m-1-2);
		\path[->]
		(m-3-3) 
		edge node[descr] {$\psi_y$} (m-1-2);
		\path[->]
		(m-3-1) 
		edge node[descr] {$\phi_x$} (m-2-2);
		\path[->]
		(m-3-3)  edge node[descr] {$\phi_y$} (m-2-2);
		\end{tikzpicture}
    	\caption{ a) Diagram of a limit b) Diagram of a co-limit}
    \label{fig4}
\end{figure}	
\end{definition}
\begin{definition}
	Dual to limit, a co-limit of a functor $F: \mathcal{A} \to \mathcal{B}$, is a universal co-cone $(L,\phi_x)$  such that for any other co-cone $(N,\psi_x)$ of $F$, there exists a unique arrow $u: L \to N$ such that $\psi_x=u \circ \phi_x$ for every $X$ in $\mathscr{B}$. Figure \ref{fig4} (b) illustrates the diagram of a co-limit. A functor $F: \mathcal{A} \to \mathcal{B}$ is called small if $\mathcal{B}$ is a small category. (co)-limits over small functors are called small.	
\end{definition} 
Symbols $\mathop{\lim_{\longrightarrow}}$ and $\mathop{\lim_{\longleftarrow}}$ are used to denote lim and co-limit often in literature.
A category is called (co)-complete if it contains all small (co)-limits. Symbols $\mathop{\lim_{\longrightarrow}}$ and $\mathop{\lim_{\longleftarrow}}$ are used to denote lim and co-limit often in literature.
\subsubsection{(Co)-ends}
(co)-ends are useful notions inspired from calculus. Particularly, an end resembles an infinite product whereas a co-end imitates the idea of an infinite sum or integral. 

(co)-ends are special (co)-limits defined on functors of the form $F: \mathcal{C}^{OP} \times \mathcal{C} \to \mathcal{D}$. Defining (co)-wedge is essential since a (co)-end is a universal (co)-wedge. 
	
A wedge of a functor $F: \mathcal{C}^{OP} \times \mathcal{C} \to \mathcal{D}$ is an object $O \in \mathcal{D}$ with an arrow $\omega_c: O \to F(C,C)$ for any object $C \in \mathcal{C}$. The universal property of a wedge enforces that for any arrow $C^\prime \to C$ for $C,C^\prime \in \mathcal{C}$, the diagram illustrated in \label{def:wedge} (a) commutes. 

	Conversely, co-ends are defined by natural co-wedges. A co-wedge for a functor $F:\mathcal{C}^{OP} \times \mathcal{C} \to \mathcal{D}$ is an object $O$ in $\mathcal{D}$ along with an arrow $\omega_c:  F(C,C) \to O$ such that for any arrow $t: C^\prime \to C$ in $\mathcal{C}$, the diagram illustrated in \label{def:wedge} (a) commutes.  
\begin{figure}[!h]
	\begin{center}
	\label{def:wedge}	
		\begin{tikzpicture}
		\matrix (m) [matrix of math nodes,row sep=4em,column sep=4em,minimum width=4em]
		{
			O	& F(C,C)   \\  F(C^\prime,C^\prime) &   F(C,C^\prime) \\};
		\path[-stealth]
		(m-1-1)   	   edge [sloped, above] node  {$\omega_c$} (m-1-2)
		(m-1-1)   	   edge [sloped, above] node  {$\omega_c^\prime$} (m-2-1)
		(m-2-1)   	   edge [ sloped, above] node  {$ F(t,1) $} (m-2-2)
		(m-1-2)   	   edge [ sloped, above] node  {$F(1,t) $} (m-2-2);
		\end{tikzpicture}		
		\begin{tikzpicture}
		\matrix (m) [matrix of math nodes,row sep=4em,column sep=4em,minimum width=4em]
		{
			F(C,C^\prime)	& F(C,C)   \\  F(C^\prime,C^\prime) &   O \\};
		\path[-stealth]
		(m-1-1)   	   edge [sloped, above] node  {$F(1,t)$} (m-1-2)
		(m-1-1)   	   edge [sloped, above] node  {$F(t,1)$} (m-2-1)
		(m-2-1)   	   edge [ sloped, above] node  {$ \omega_c ^\prime $} (m-2-2)
		(m-1-2)   	   edge [ sloped, above] node  {$\omega_c $} (m-2-2);
		\end{tikzpicture}
		\end{center}

	\caption{a)Diagram of a wedge b)Diagram of a co-wedge}
\end{figure}
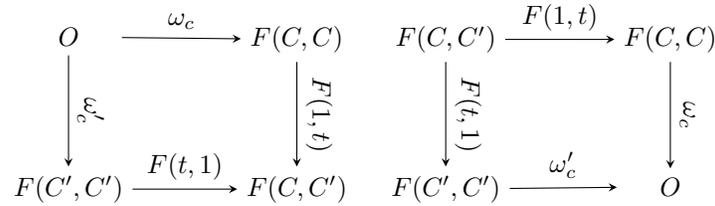

The abusing integral notation for denoting (co)-ends stems from the work of N.Yoneda  \cite{loregian2015co} while he came up with functors $\mathcal{C}^{OP} \times \mathcal{C} \to \mathsf{Ab}$. The subscripted integral notation $\int_C F(C,C)$ denotes an end of a functor $F:\mathcal{C}^{OP} \times \mathcal{C} \to \mathcal{D}$ whereas the superscripted  integral notation $\int^C F(C,C)$ demonstrates a Co-end for $F$.

A helpful property of ends which makes them so useful is their capability of representing natural transformations. This  can be expressed by the following theorem,
\begin{theorem}
	Given two functors $F,G: \mathcal{C} \to \mathcal{D}$, the set of natural transformations between $F,G$ denoted by $[C,D](F,G)$ equals to $[C,D](F,G)=\bigintssss \limits_{c \in \mathscr{C}}  D(F(c),G(c))$.
\end{theorem}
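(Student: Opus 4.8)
The plan is to realize both sides of the claimed equality as one and the same set of ``compatible families'' of morphisms. First I would set up the bifunctor whose end occurs on the right-hand side: define $H:\mathcal{C}^{OP}\times\mathcal{C}\to\mathsf{Set}$ on objects by $H(c,c')=\mathcal{D}(F(c),G(c'))$, letting the contravariant first slot act by precomposition with $F$ and the covariant second slot act by postcomposition with $G$; explicitly, for morphisms $s:c_1\to c_2$ and $t:c'_1\to c'_2$ the induced map sends $h\mapsto G(t)\circ h\circ F(s)$. Functoriality of $H$ is then a routine consequence of the functoriality of $F$ and $G$ together with associativity of composition in $\mathcal{D}$, and on the diagonal one recovers $H(c,c)=\mathcal{D}(F(c),G(c))$, so that $\int_{c}\mathcal{D}(F(c),G(c))$ is precisely the end of $H$.

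Next I would unwind this end as a universal wedge, following the defining diagram of a wedge. Because $\mathcal{D}$ is locally small and, by the smallness hypothesis, $\mathcal{C}$ is small, the end exists as an honest set and admits the concrete description as the collection of families $(\alpha_c)_{c\in\mathcal{C}}$, with $\alpha_c:F(c)\to G(c)$, that satisfy the wedge condition. A clean way to extract this description is to consider wedges out of the terminal set $\mathbf{1}$: an element of the end is the same as a wedge $\mathbf{1}\to H$, which is exactly a choice of one morphism $\alpha_c\in\mathcal{D}(F(c),G(c))$ for each object $c$, subject to the single wedge equation read off the wedge diagram.

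The heart of the argument is to verify that this wedge equation is literally the naturality condition. Unravelling the actions fixed in the first step, $H(1,t)$ is postcomposition by $G(t)$ while $H(t,1)$ is precomposition by $F(t)$, so the wedge equation $H(1,t)\circ\omega_c=H(t,1)\circ\omega_{c'}$ becomes, for every $t:c\to c'$ in $\mathcal{C}$, the identity \[ G(t)\circ\alpha_c=\alpha_{c'}\circ F(t). \] This is exactly the commutativity of the square defining a natural transformation $\alpha:F\Rightarrow G$ in Figure \ref{fig1}. Hence a family $(\alpha_c)$ satisfies the wedge condition if and only if it is a natural transformation from $F$ to $G$, and the set of wedges out of $\mathbf{1}$ coincides with $[\mathcal{C},\mathcal{D}](F,G)$.

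Finally I would assemble these identifications: the underlying set of $\int_{c}\mathcal{D}(F(c),G(c))$ consists of the compatible families, which are precisely the natural transformations $F\Rightarrow G$, yielding the asserted equality. The step I expect to be the main obstacle is bookkeeping rather than conceptual: one must be careful that the variance conventions of the wedge diagram align correctly with those of the naturality square of Figure \ref{fig1}, so that the single wedge equation reproduces the naturality square with the arrows pointing in the intended directions. Matching the precomposition and postcomposition sides correctly is exactly where direction-of-arrow slips tend to occur, and keeping $t:c\to c'$ fixed throughout is what prevents them.
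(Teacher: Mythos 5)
Your proposal is correct and follows essentially the same route as the paper's proof: unwind the end as a universal wedge of the hom-bifunctor $(c,c')\mapsto\mathcal{D}(F(c),G(c'))$ and observe that the wedge condition for $t:c\to c'$ is exactly the naturality square $G(t)\circ\alpha_c=\alpha_{c'}\circ F(t)$. Your write-up is in fact more complete than the paper's, which only sketches the direction from elements of the end to natural transformations and leaves the bifunctor structure and the converse implicit.
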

\begin{proof}
	Suppose $\bigintssss \limits_{c \in \mathscr{C}}  D(F(c),G(c))$ includes morphisms $h(c):F(c) \to G(c)$ in $\mathscr{D}$. For any other morphism $f: c \to d$ in $\mathscr{C}$, The following diagram commutes as a consequence of an end properties.	
	\begin{figure}[!h]
		\begin{center}
			\begin{tikzpicture}
			\matrix (m) [matrix of math nodes,row sep=3em,column sep=4em,minimum width=5em]
			{
				F(c)	& F(d) &  \\ G(c)  &  G(d)  \\};
			\path[-stealth]
			(m-1-1)   	   edge [sloped, above] node  {$h(c)$} (m-2-1)
			(m-1-2)   	   edge [ sloped, above] node  {$ h(d) $} (m-2-2)
			(m-1-1)   	   edge [ sloped, above] node  {$ F(f) $} (m-1-2)
			(m-2-1)   	   edge [ sloped, above] node  {$ G(f) $} (m-2-2);
			\end{tikzpicture}
		\end{center}
	\end{figure} 
	This is by definition a natural transformation $F \Rightarrow G$.
\end{proof}	
Some practical properties of ends that will be used later are:
\begin{equation}
\int \limits_{ a \in A}  \int \limits_{b \in B} F(a,a,b,b) = \int \limits_{b \in B}  \int \limits_{a \in A} F(a,a,b,b) = \int \limits_{(a,b) \in A\times B} F(a,a,b,b)
\end{equation}
\begin{equation}
\int \limits_{A} [C,F(A,A)] \simeq [C, \int \limits_{A} F(A,A)] \hspace*{52mm}
\label{eq1}
\end{equation}
\begin{equation}
\int \limits_{A} [F(A,A), C] \simeq [\int \limits^{A} F(A,A),C] \hspace*{52mm}
\label{eq2}
\end{equation}
\begin{definition}
A functor $\mathcal{C}, \mathcal{D}$ along with functors $F: \mathcal{D} \to \mathcal{C}$ and $G: \mathcal{C} \to \mathcal{D}$, 
$F$ is assumed to preserve co-limits, if $\mathop{\lim_{\longrightarrow}}_{i}X_i$ which exists in $\mathcal{C}$ for a functor $X: \mathcal{I} \to \mathcal{C}$, $F(\mathop{\lim_{\longrightarrow}}_{i}X_i) \simeq \mathop{\lim_{\longrightarrow}}_{i}F(X_i)$. Conversely, $G$ is assumed to preserve all small limits if it forces that the limit $\mathop{\lim_{\longleftarrow}}_{i}X_i$ for a functor $X: \mathscr{I} \to \mathscr{C}$ in $\mathscr{C}$ if exists, $G(\mathop{\lim_{\longleftarrow}}_{i}X_i) \simeq \mathop{\lim_{\longleftarrow}}_{i}G(X_i)$. 
\label{preservingLimits}
\end{definition}	
	
\subsection{Adjunctions}
One of the main intentions of mathematics is to compare two models. One way to express that two models or objects are similar is by equality. However, equality is too much strong in many cases. Another similarity observation in some cases is isomorphism.
Isomorphism is a weaker notion comparing with equality. Two categories $\mathcal{C},\mathcal{D}$ are isomorphic if there exist two functors $R: \mathcal{C} \to \mathcal{D}$ and $L:\mathcal{D} \to \mathcal{L}$ such that $L \circ R=\mathsf{id}_{\mathcal{C}}$ and $R \circ L=\mathsf{id}_{\mathcal{D}}$. However, the notion of isomorphism is also too ambitious to expect in many cases. 

Adjunction weakens even the requirements needed by isomorphism for two categories by just asking a one way natural transformation $\eta:\mathsf{id}_{\mathcal{D}} \Rightarrow R \circ L$ and another natural transformation expressing $\epsilon:L \circ R \Rightarrow \mathsf{id}_{\mathcal{C}}$. In the language of category theory $\eta$ is called the unit and $\epsilon$ is called the co-unit of the adjunction.  Functor $L$ is noted as the left adjoint to functor $R$ and $R$ is called the right adjoint to $L$. 

One can also express the adjunctions in terms of triangular identities \cite{borceux1994handbook,borceux1994handbook3} depicted by diagrams in figure \ref{adjoint}: 

\begin{figure}[h]
	\centering
	\begin{tikzpicture}[node distance=3.1cm, auto]
	
	\pgfmathsetmacro{\shift}{0.3ex}
	
	\node (R) {$L$};
	\node (P) [right of=R]{$L \circ R \circ L$};
	\node (B) [below of=P] {$L$};
	
	\draw[transform canvas={yshift=0.5ex},->] (R) --(B) node[above,midway] { };
	\draw[transform canvas={yshift=-0.5ex},->](B) -- (R) node[below,midway] { }; 
	
	\draw[transform canvas={yshift=-0.5ex},->](R) -- (P) node[above,midway] { $L \circ \eta$}; 
	\draw[transform canvas={xshift=-0.5ex},->](P) -- (B) node[right,midway] { $\epsilon \circ L$};

	\end{tikzpicture} 
	\\
	\begin{tikzpicture}[node distance=3.1cm, auto]
	
	\pgfmathsetmacro{\shift}{0.3ex}
	
	\node (R) {$R$};
	\node (P) [right of=R]{$R \circ L \circ R$};
	\node (B) [below of=P] {$R$};
	
	\draw[transform canvas={yshift=0.5ex},->] (R) --(B) node[above,midway] { };
	\draw[transform canvas={yshift=-0.5ex},->](B) -- (R) node[below,midway] { }; 
	
	\draw[transform canvas={yshift=-0.5ex},->](R) -- (P) node[above,midway] { $ \eta \circ R$}; 
	\draw[transform canvas={xshift=-0.5ex},->](P) -- (B) node[right,midway] { $R \circ \epsilon$};

	\end{tikzpicture} 
	\caption{Triangle diagrams(Adjunctions)}
	\label{adjoint}
\end{figure}
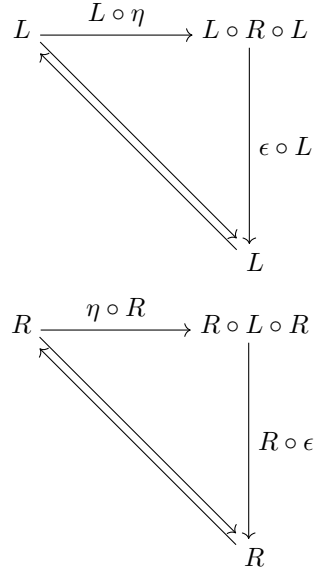
\textbf{Example.}\\
Adjunctions in the category of preorders corresponds to functors $F: L_1 \to L_2 ^{OP} $, $G: L_2 ^{OP} \to L_1$ between two preorders $L_1, L_2$.  $L$ is the left adjoint of $G$ iff for any $p \in L_1, q \in L_2$,\begin{center} $q \leq F(p) \implies p \leq G(q) $ \end{center}
The adjunction in category of complete lattices called Galois connections plays a crucial role in mathematical morphology area. To express more, any left adjoint in the category of complete lattices is a dilation and its right adjoint is an erosion consequently.

A major property of adjoints which is widely used in category theory is that they preserve (co)-limits. A left adjoint preserves co-limits whereas a right adjoint preserves limits.

\subsection{The Yoneda lemma}
The Yoneda lemma is a major and applicative result of category theory \cite{riehl2017category}. It allows us to embed any category into the category of contravariant functors stemming from that category to the category of sets. The Yoneda lemma makes the life easier by suggesting that one can investigate functors from a small category to the category of sets instead of investigating directly on it. In many cases the former inspection is much more easier. 
\begin{definition}
	Consider a functor $F:\mathcal{A} \to \mathsf{Set}$ from an arbitrary category $\mathcal{A}$ to the category of sets,  an object $A \in \mathcal{A}$ and the corresponding functor $\mathcal{A}(A,-): A \to \mathsf{Set}$. There exists a bijective correlation, \[ \mathsf{nat}(\mathcal{A}(A,-),F) \simeq FA\]
\end{definition}
 The main idea inherited in Yoneda lemma is that every information we need about an object $A \in \mathcal{C}$ is encoded in $\mathcal{C}[-, A]$. Yoneda lemma can be expressed by co-ends also. 
 Let $F:\mathcal{C}^{OP} \to \mathsf{Set}$ and $G:\mathcal{C} \to \mathsf{Set} $ be functors. The following formulas express the Yoneda lemma.
 \begin{equation*}
 F \simeq 	\int \limits^{A \in \mathcal{C}} FA \times \mathcal{C}[-,A] 
 \end{equation*}
 \begin{equation*}
 G \simeq 	\int \limits^{A \in \mathcal{C}} GA \times \mathcal{C}[A,-] 
 \end{equation*}
 
\subsubsection{Monoidal categories}
	\begin{definition}
	A category $\mathcal{C}$ is monoidal if it is  equipped with a tensor product $\otimes$ that satisfies some conditions. Roughly speaking $ \otimes: \mathcal{C} \times \mathcal{C} \to \mathcal{C}$ is a functor satisfying:
	\begin{itemize}
		\item $(A \otimes B) \otimes C \rightarrow A \otimes (B \otimes C)$ (Associativity isomorphism law)
		\item There exist an identity object $I$ satisfying: $\lambda_A:A \otimes I  \rightarrow A$ and $\rho_A:I \otimes A \rightarrow A$
		\item The following two commutations hold:	
	\end{itemize}
	\begin{center}
		\begin{tikzpicture}
		\matrix (m) [matrix of math nodes,row sep=4em,column sep=3em,minimum width=2em]
		{
			& (A \otimes (B \otimes C)) \otimes D \\    ((A \otimes B) \otimes C) \otimes D &  & A \otimes ( (B \otimes C) \otimes D)  \\
			(A \otimes B) \otimes (C \otimes D)  & & (A \otimes B ( \otimes (C \otimes D))  \\ };
		\path[-stealth]
		(m-2-1)   	   edge [sloped, above] node  {$\alpha_{A,B,C} \otimes 1_D$} (m-1-2)
		(m-1-2)   	   edge [sloped, above] node  {$\alpha_{A,B \otimes C,D } $} (m-2-3)
		(m-2-1)   	   edge [ right] node  {$\alpha_{A \otimes B, C,D } $} (m-3-1)
		(m-2-3)   	   edge [ right] node  {$ 1_A \otimes \alpha_{B, C,D } $} (m-3-3)
		(m-3-1)   	   edge [above] node  {$\alpha_{A ,B, C \otimes D } $} (m-3-3);
		\end{tikzpicture}
		\begin{tikzpicture}
		\matrix (m) [matrix of math nodes,row sep=4em,column sep=3em,minimum width=2em]
		{
			(A \otimes I) \otimes B & &     A \otimes (I \otimes B) \\  & A \otimes B \\};
		\path[-stealth]
		(m-1-1)   	   edge [sloped, above] node  {$\alpha_{A,I,B}$} (m-1-3)
		(m-1-1)   	   edge [sloped, above] node  {$\lambda_A \otimes 1_B $} (m-2-2)
		(m-1-3)   	   edge [ sloped, above] node  {$1_A \otimes \rho_B $} (m-2-2);
		\end{tikzpicture}
	\end{center}
\end{definition}
A monoidal category is left closed if for each object $A$ the functor $B \longmapsto A \otimes B$ has a right adjoint $ B \longmapsto ( A \Rightarrow B) $. This means that the bijection $\mathsf{Hom}_\mathcal{C}(A\otimes B, C) \cong \mathsf{Hom}_\mathcal{C}(A, B \Rightarrow C)$ between the Hom-sets called currying exists. 

Dually, the monoidal category $\mathcal{C}$ is right closed if the functor $ B \longmapsto B \otimes A$  admits a right adjoint. In a symmetric monoidal category the notions of left closed and right closed coincide.
\subsection{Enriched categories}
Generally, the arrows between two objects A,B in category $\mathcal{C}$ illustrated by $\mathcal{C}[A,B]$ are a set. The notion of enrichment extends that structure from merely a set to more fruitful structures \cite{kelly1982basic,day1969enriched}. For instance the set of arrows between two objects may have an abelian structure meaning that it is possible to add the arrows between two objects. A restriction imposed on a category $\mathcal{V}$ on which the arrows are enriched over it is that it should have a monoidal structure. More formally,
\begin{definition}
	Let $\mathcal{V}$ be a symmetric monoidal category. A category enriched over $\mathcal{V}$ called a $\mathcal{V}$-category consists of:
	\begin{itemize}
	\item For all objects $A,B \in \mathcal{C}$, the set of arrows from $A$ to $B$ is an object $O \in \mathcal{V}$.
	\item For all object $A,B,C \in \mathcal{C}$, there exist composition of arrows in $\mathcal{V}$ such that $c_{A,B,C}: \mathcal{C}[A,B] \times \mathcal{C}[A,B] \to \mathcal{C}[A,B]$.
	\item For each object $A \in \mathcal{C}$, an identity arrow exists in $\mathcal{V}$ such that $I \to \mathcal{C}(A,A)$.	
\end{itemize}	
	
\end{definition}	
\section{Categorical dilation and erosion}

Considering the well-known notion of dilation in the category of complete lattices ($\mathsf{CompLat}$), we may generalize it to other categories as follows:
\begin{definition}
  A dilation is a co-limit preserving functor whereas an erosion is a limit preserving functor. 
	\label{dilation}
\end{definition}

We expressed previously that left/right adjoint functors preserve co-limits/limits respectively. Thus, the claim that left/right adjoints are a major source of dilation and erosion functors is pretty precise. 

Another concept that we will utilize it for defining morphological operations on matrix representation of images is the Day convolution \cite{day1970closed,im1986universal}. Let $\mathcal{V}$ be a complete and co-complete small symmetric monoidal category. Let the enriched Yonedda embedding functor to be $ \mathcal{C} \to [\mathcal{C}^{OP}, \mathcal{V}]$. The intuition of Day convolution is that a monoidal structure on $ \mathscr{C}$ brings the monoidal structure on $[\mathscr{C}^{OP}, \mathscr{V}]$. 

The enriched co-Yoneda lemma states that any $\mathscr{V}$-enriched functor $[\mathscr{C}^{OP},\mathscr{V}]$  is canonically isomorphic to a co-end of representables. This can be expressed as $F(c)  \simeq \mathlarger{\int^{c}F(c) \otimes \mathcal{C}(-,c)}$.
 Taking two functors $F,G: \mathcal{C}^{OP} \to \mathcal{V}$, define their multiplication as:
\[ F \ast G=\mathlarger{\int^{c}F(c)} \otimes \mathcal{C}(-,c) \ast \mathlarger{ \int^{b}G(b) \otimes \mathcal{C}(-,b)} \] 
Assuming the multiplication operation interchanges properly with the co-end, we get: \[ F \ast G=\mathlarger{\int^{c,b}F(c) \otimes G(b) \otimes (\mathcal{C}(-,c) \ast \mathcal{C}(-,b)}) \] 
Forcing the Yoneda embedding $ \mathcal{C} \to [\mathcal{C}^{OP}, \mathcal{V}]$ be strongly monoidal yields to:
\begin{equation}
F \ast G  \\ \simeq \mathlarger{\int^{c,b}}F(c) \otimes G(b) \otimes  \mathcal{C}(-,c \otimes b).
\end{equation}  
\begin{definition}
	A \textit{semiring} category $\mathcal{S }$ is a category with two operations of addition and multiplication shown as $(\oplus,\cdot)$ such that $(\mathcal{S},\oplus,0)$ is monoidal and symmetric, $(\mathcal{S},\dot,1)$ is monoidal. Left and right distributivity of multiplication over addition is expressed by natural isomorphisms as:
\begin{flalign*}
A \cdot (B \oplus C) \to (A \cdot B) \oplus (A \cdot C) \\
(B \oplus C) \cdot A \to (B \cdot A) \oplus (C \cdot A)
\end{flalign*}
Evidently, a semiring category is a ring in which the elements lack their inverses for addition. The max-plus category may be defined by $a \oplus b = \max \{a,b\}$ and $a \cdot b = a +b$ along with $ +\infty$ and $0$ acting as unit objects for addition and multiplication.  
\end{definition}
Notions such as group, ring, semirings and many other varieties of algebraic structures are categorized via Lawvere theory \cite{lawvere1963functorial}.  If $\mathsf{T}$ is a Lawvere theory and $\mathcal{C}$ is a category with finite products such as $\mathsf{set}$, a functor $F: \mathsf{T}^{OP} \to \mathcal{C}$ creates a full subcategory called a model of $\mathsf{T}$. The model of Lawvere theories when $\mathcal{C}$ is category of sets is referred as $\mathsf{T}$-algebras. $\mathsf{T}$-algebras built on complete and co-complete category $\mathcal{C}$ which is in most cases category of sets are complete and cocomplete. Literally, the forgetful functors from a $\mathsf{T}$-algebra to set category creates and preserves limits. 
\begin{definition}
Given two matrices $R_{(m,n)}$ and $S_{(p,q)}$, their tensor product known also as Kronecker product is defined by:
\[ R \otimes S = \begin{bmatrix}
r_{1,1}S & \cdots & r_{1,n}S          \\[0.3em]
\vdots &            & \vdots \\[0.3em]
r_{m,1}S & \cdots & r_{m,n}S
\end{bmatrix}\]	
\end{definition}	
	
\begin{definition}
The category of $\mathsf{Mat}$ contains the set of integers as objects with arrows between two object $m,n$ as $ m \times n$ matrices with the matrix multiplication as composition of arrows. $\mathsf{S-mat}$ is deducted from enriching $\mathsf{Mat}$ on a semiring like $S$. In other words, $\mathsf{S-mat}$ has sets as objects and $\mathsf{s}$-valued matrices $m \times n \to \mathsf{S}$ as morphisms. For instance, if  $S=(\{0,1\}, \vee, \wedge ) $ is the Boolean semiring, then the $\mathsf{S-mat}$ is exactly the well-known category of $\mathsf{Rel}$. 
\end{definition}

The $\mathsf{V-mat}$ category with the array multiplication, Kronecker product behaving as composition and tensor product respectively with the identity matrix constitute a monoidal category. 
Let $\mathcal{X}$ be the discrete category containing tuples of integers as objects with arrows $\mathcal{X}((m,n),(m,n))	= \mathbb{Z}$. Let us define $F,G$ as two dimensional matrices over a semiring $S$. Eventually, $\mathcal{X}$ will act for indexing the two matrices.
Day convolution of $F,G$ can be defined by:
\begin{equation}
F \ast G  \\ \simeq \mathlarger{\int^{(m,n),(p,q)}}F_{(m,n)} \otimes G_{(p,q)} \otimes  \mathcal{C}(-,(m,n) \otimes (p,q)).
\label{Dayconvolution}
\end{equation}
Defining the tensor product $(m,n) \otimes (p,q)$ on the discrete category $\mathscr{X}$ as $(m,n) \otimes (p,q)=(m+p,n+q)$ brings it a monoidal structure. Thus, \ref{Dayconvolution} can be written as:
\begin{equation}
(F \ast G)_{(r,s)} = \bigoplus F_{(m,n)} \cdot G_{(p,q)}.
\label{fig10}
\end{equation} 
where $ m+p =r \wedge n+q =s$. 

assuming $F,G$ as the source image and the structuring element, $(F \ast G)$ can be defined as their dilation. The following example illustrates dilation of two binary images. 

Example: \\[0.2cm] Given $F=\begin{pmatrix}  

 0& 1 & 0 \\  0 & 0 &1 \\  1 &1 &0
\end{pmatrix}$ and $ G=\begin{pmatrix}
 0& 1 & 0 \\  1 & 0 &0 \\  0 &0 &0
\end{pmatrix} $, 
Assuming $F,G$ defined on Boolean semiring, one can derive the formula from \ref{fig10} using $\vee$ and $\wedge$ as addition and multiplication respectively. So
$F \ast G$ can be calculated like
\begin{equation}
(F \ast G)[r,s] = \bigvee (F[m,n] \wedge G[p,q]) 
\label{fig12} 
\end{equation}
where $ m+p =r \wedge n+q =s$. 
 \\[0.3cm] 
Hence, the following matrix will be induced by \ref{fig12}\\
$
F \ast G= \begin{pmatrix}
0 &0 &1&0 &0 \\
0&1&0&1&0 \\
0&1&1&0&0 \\
1&1&0&0&0\\
0&0&0&0&0 \\
\end{pmatrix} 
$. 

It should be noted that this result corresponds exactly to  dilating binary image $F$ with the structuring element $G$ using well-known existing formulation. However, by migrating from Boolean to max-plus semiring, formulation of gray-scaled images dilation is calculated by the following Day convolution:
\begin{equation}
(F \ast G)_{(r,s)} = \max (F_{(m,n)} + G_{(p,q)}) .
\end{equation} 	
Other known morphological operation than can be derived from \ref{fig10} is the fuzzy dilation first appeared in \cite{de1998fuzzy}. For that intention we need to use the semiring with $ a \oplus b = \max(a,b)$ and $a \cdot b = \min (a,b)$ denoted as the $\mathsf{min-max}$ to get the formula:
\begin{equation}
(F \ast G)_{(r,s)} = \max ( \min(F_{(m,n)},  G_{(p,q)})) .
\end{equation}
in which $r=m+p \wedge s=n+q$.
We can concentrate on celebrated tensor-hom adjunction for extracting the expression of erosion from dilation. Given a closed monoidal category $\mathcal{C}$, tensor-hom adjunction states that for an object $A \in \mathcal{C}$, tensor product $- \otimes A$ is the left adjoint with the internal hom functor $[A,-]$. This can be expressed by:
\begin{equation}
\mathcal{C}[A \otimes B,C] \simeq \mathcal{C}[A,[ B,C]]
\label{hom-tensor}
\end{equation}
The morphism $\mathcal{C}[A \otimes B,C] \to \mathcal{C}[A,[ B,C]]$ is nominated as currying in literature. Intuitively, currying is achieved by $\tau: m \otimes n \to \tau (m)(n)$. The hom-tensor adjunction \ref{hom-tensor} is used to calculate the right adjoint which is equal to erosion.

\begin{deriv}
   \mathcal{V} [F \ast G, E]
	\<=
	\commentaire{Natural transformations representation by ends} 
	\bigintsss \limits_C [(F \ast G)C, EC]
	\<=
	\commentaire{Definition of Day convolution}
	\bigintsss \limits_C [\bigintsss \limits^{A,B} (FA \otimes GB \otimes \theta(A \otimes B,C), EC ]
	\<\simeq
	\commentaire{\ref{eq2}}
	\bigintsss \limits_C  \bigintsss \limits_A [FA, [ \bigintsss \limits^B GB \otimes \theta(A \otimes B,C), EC ]	
	\<\simeq
	\commentaire{Commutativity of ends}
	\bigintsss \limits_A  \bigintsss \limits_C [FA,[  \bigintsss \limits^B GB \otimes \theta(A \otimes B,C), EC ]]
	\<\simeq
	\commentaire{\ref{eq2}}
	\bigintsss \limits_A [FA, \bigintsss \limits_{B,C}  [ \theta(A \otimes B,C),[GB, EC ]]]
	\<\simeq
	\commentaire{}
	[A,[B,C]]
\end{deriv}	
 Thus the erosion of two binary matrices $F,G$ depicted by $F \ast^\prime G$ can be written as:
\begin{equation}
(F \ast^\prime G)[r,s]=\bigwedge (F[m,n] \wedge G[p,q])
\end{equation}
where $r= m-p $ and $s=n-q$.
The erosion of two gray-scaled images can be extracted by using max-plus semiring,
\begin{equation}
(F \ast^\prime G)[r,s]=\min (F[m,n]-G[p,q])
\end{equation}
where $r= m-p $ and $s=n-q$.

\section{conclusion}
Category theory provides an abstract unified framework for almost all aspects of mathematics. This is the first research conducted on creating a unified definition for two fundamental morphological operations of dilation and erosion. We have unified morphological operations appeared in contrasting situations like binary, gray-scaled and fuzzy operators into a uniform definition that can be extended to some new variations with using different semirings. 

An interesting horizon for the future research on category theory and mathematical morphology can be imagined by *-autonomous categories and mathematical morphology. *-autonomous categories are categorical representation of linear logic which has been a major research area. Many models for linear logic such as Petri nets and game semantics has been suggested but none of them is satisfying. Mathematical morphology is claimed to be a model of linear logic \cite{van2007modal}. The authors have shown that every derivable formula in linear logic should be a model of mathematical morphology but the reverse is an open problem. Conducting research on *-autonomous which are just symmetric monoidal categories with an involution object and morphological operations will help to shed the light over relation of linear logic and mathematical morphology.

\bibliographystyle{abbrv} 
\bibliography{mybib10}

\end{document}